\documentclass[11pt]{amsart}
\usepackage[T1]{fontenc}
\usepackage[latin9]{inputenc}
\usepackage[a4paper]{geometry}
\geometry{verbose,tmargin=3cm,bmargin=2cm,lmargin=3cm,rmargin=3cm}
\usepackage{amsmath}
\usepackage{amsthm}
\usepackage{amssymb}
\usepackage{tikz-cd}

\begin{document}

\makeatletter

\newtheorem{thm}{Theorem}[section]
\newtheorem{defn}[thm]{Definition}
\newtheorem{lmma}[thm]{Lemma}
\newtheorem{coro}[thm]{Corollary}

\newcommand{\diam}{\mathrm{diam\,}}

\title{Completion of premetric spaces}

\author{Jos\'e Andr\'es Quintero \and Carlos Uzc\'ategui}
\address{\llap{*\,} Departamento de Matem\'aticas, Facultad de Ciencias, Universidad Nacional de Colombia, Carrera 30 Calle 45, Bogot\'a, COLOMBIA}
\email{joquinteroc@unal.edu.co}

\address{\llap{**\,}Escuela de Matem\'aticas, Facultad de Ciencias, Universidad Industrial de Santander, Ciudad Universitaria, Carrera 27 Calle 9, Bucaramanga, Santander, A.A. 678, COLOMBIA.}
\email{cuzcatea@saber.uis.edu.co}

\keywords{premetric spaces, completion. }

\maketitle

\begin{abstract}
We present a method for completing a premetric space, in the sense introduced by F. Richman in the context of constructive Mathematics without countable choice.
\end{abstract}

\section{Introduction}

F. Richman \cite{Richman2008}  addressed the problem of completing a metric space in constructive mathematics without the axiom of countable choice.  The classical completion based on Cauchy sequences cannot be done in constructive mathematics as it was shown by Lubarsky \cite{Lubarsky2007}. Richman replaced the notion of a metric space (which suppose the existence of $\mathbb{R}$) by a structure he called a  premetric space which only needs the rational numbers. In this paper we present a method for completing a premetric space quite similar to Richman's construction but simpler. 
 
Now we recall the basic definitions and the proposed completion introduced by Richman.

Let $X$ be a nonempty set. A binary relation $d$ between $X\times X$
and the nonnegative rationals is a\textit{ premetric} on $X$, and
we write $d(x,y)\leq q$ instead of $((x,y),q)\in d$, if it satisfies
the following conditions for all $x,y,z\in X$ and all nonnegative rational numbers $p, q$:
\begin{enumerate}
\item $d(x,y)\leq0$ if and only if $x=y$.
\item if $d(x,y)\leq q$ then $d(y,x)\leq q$.
\item if $d(x,z)\leq p$ and $d(z,y)\leq q$ then $d(x,y)\leq p+q$ (triangular inequality).
\item $d(x,y)\leq p$ if and only if $d(x,y)\leq q$ for all $q>p$ (upper continuity).
\end{enumerate}
A set with a premetric is called a \textit{premetric space}, and we
use the notation $(X,d)$ to specify the set and its premetric. Also,
we write $d(x,y)\nleq q$ instead of $((x,y),q)\notin d$. When a relation $d$ satisfies 2., 3., 4. above and also  $d(x,x)\leq 0$ for all $x\in X$, then $d$ is called a {\em pseudo-premetric}. When $(X,d)$ is a pseudo-premetric space, then the relation $x\sim y$ if $d(x,y)\leq 0$ is an equivalence relation. Thus we can define the corresponding quotient and get a premetric space where the induced premetric is given by $d([x],[y])\leq q$ if $d(x,y)\leq q$. The proof that $\sim $ is an equivalence relation, and that the induced premetric is indeed a premetric depends only on the first three defining conditions of a pseudo-premetric.

In general, common notions defined for metric spaces can be copied on premetric spaces. If $D\subseteq X$ and $X$ is premetric space, then $D$ is said to be {\em dense} in $X$ when given any $\varepsilon>0$ and $x\in X$, there exists $y\in D$ such that $d(y,x)\leq \varepsilon$. A map $f:X\to Y$ between two premetric spaces $(X,d_X)$ and $(Y,d_Y)$ is an {\em isometric embedding} if for all $x,x'\in X$ and $q$ a non negative rational,  $d_X(x,x')\leq q$ if, and only if, $d_Y(f(x),f(x'))\leq q$.  When $f$ is in addition onto, then it is called an {\em isometry}. The metric notion of  diameter of a set is also defined on a premetric space as a binary relation, namely, we write $\diam A\leq q$ for $A\subseteq X$ and a nonnegative rational number $q$, if for all $x,y\in A$, $d(x,y)\leq q$.

Richman introduced the following notions. A family $\{S_q:\; q\in \mathbb{Q}^+\}$ of subsets of $X$ is {\em regular} if $d(x,y)\leq p+q$ for all $x\in S_p$ and $y\in S_q$.  Two regular families $S=\{S_q:\; q\in \mathbb{Q}^+\}$  and $T=\{t_q:\; q\in \mathbb{Q}^+\}$  are {\em equivalent} if $d(x,y)\leq p+q$ for all $x\in S_p$ and $y\in T_q$. Let $\widehat{X}_R$ be the quotient of all regular families under that equivalence relation. The  natural identification $i_R:X\to\widehat{X}_R$ is defined by $i_R(x)=[S^x]$, where $S^x=\{S_q:\; q\in \mathbb{Q}^+\}$ with  $S_q=\{x\}$ for all $q\in \mathbb{Q}^+$. Thus we have the notion of completeness introduced by Richman: A premetric space $(X,d)$ is {\em R-complete} if the map  $i_R$ is onto. And finally, a premetric on $\widehat{X}_R$ is defined as follows:
$\widehat{d}_R([S],[T])\leq q$, if for all $\varepsilon >0$, there are $a,b,c\in \mathbb{Q}^+$ and  $s\in S_a$,  $t\in T_b$ such that $a+b+c < q+\varepsilon$ and $d(s,t) \leq c$. Then Richman proved that $i_R:X\rightarrow\widehat{X}_R$ is an isometric embedding of $X$ into $\widehat{X}_R$ with dense image. On his  review  of  Richman's paper \cite{Richman2008}, A. Setzer \cite{Setzer}  remarked  that  there was not a formal verification that  $(\widehat{X}_R, \widehat{d}_R)$ was indeed a R-complete premetric space. Motivated in part for Setzer's remark, we define a premetric space $\widehat{X}$ using,  instead of regular families, some collections of subsets of $X$ similar to Cauchy filters but simpler. We introduce a notion of completeness and show that $\widehat{X}$ is a completion of $X$.  Moreover, we show that  $(\widehat{X}_R, \widehat{d}_R)$ is isometric to ours and it is R-complete.

\section{Completeness}

In order to simplify the treatment of completeness developed by Richman, we use, instead of regular families, a more abstract notion similar to that of a Cauchy filter.

\begin{defn}
Let $X$ be a premetric space and $F\subseteq\mathcal{P}(X)$. We
say that $F$ is a \textit{Cauchy family} on $X$ if it satisfies
the following:
\end{defn}

\begin{itemize}
\item[\textit{i})] $S\cap T\neq\emptyset$ for all $S,T\in F$.
\item[\textit{ii})] for all $\varepsilon>0$ there exists $S\in F$ such that $\diam S\leq\varepsilon$.
\end{itemize}

A trivial example of Cauchy family is  $\{\{x\}\}$, where $x$ is any point of a premetric space. A Cauchy filter is a just a Cauchy family which is also a filter. Notice that a Cauchy family does not need to be even a filter base, as it could fail to have the finite intersection property. For instance, consider  $\mathbb{Q}$ as a premetric space with its natural premetric given by the relation $\left|x-y\right|\leq q$. Let $F=\{S_q:\;q\in \mathbb{Q}^+\}\cup \{\mathbb{Q}^+_0, \mathbb{Q}^-_0\}$, where $\mathbb{Q}^+_0=\mathbb{Q}^+\cup\{0\}$, $\mathbb{Q}^-_0=\mathbb{Q}^-\cup \{0\}$ and 
$$
S_q=\{t\in \mathbb{Q}:0<\left|t\right|\leq q\}
$$
for $q\in \mathbb{Q}^+$. Then $F$ is a Cauchy family that does not satisfy the finite intersection property as  $S_p\cap \mathbb{Q}^+_0\cap\mathbb{Q}^-_0=\emptyset$. 

There is no problem in using Cauchy filters instead of regular families, but the notion of a Cauchy family is sufficient for our purposes. 

Now we introduce a pseudo-premetric on the collection of  Cauchy families. 

\begin{defn}
\label{def-CauchyF}
Let $F$ and $F'$ be Cauchy families on $X$, we say that $d(F,F')\leq q$,
if for all $\varepsilon>0$ there exist $S\in F$ and $T\in F'$
such that:

\begin{enumerate}
\item[\textit{i})] $\mathrm{diam}\,S,\mathrm{diam}\,T\leq\varepsilon.$
\item[\textit{ii})] $\mathrm{diam}(S\cup T)\leq q+\varepsilon.$
\end{enumerate}
\end{defn}
The intuition behind the premetric on the collection of Cauchy families is as follows. Considering each Cauchy family $F$ as a point, then the elements of $F$ are  ``good approximations'' of $F$. Thus smaller the diameter of an element $S\in F$, the  better approximation it is. Thus, roughly speaking,  two Cauchy families $F$, $F'$ are at distance at most $q$, when there are approximations $S$ and $T$ of $F$ and $F'$, respectively, such that $S$ and $T$ are at distance at most $q$.

Here we abuse a bit with the notation, we use $d$ for both the premetric on $X$ and the  ternary relation just defined above. 

\begin{thm}
Let $(X,d)$ be a premetric space. The  relation $d$ on the collection of Cauchy families is a pseudo-premetric.
\end{thm}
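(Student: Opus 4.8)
The plan is to verify the four defining conditions of a pseudo-premetric for the ternary relation $d$ on the collection of Cauchy families: reflexivity ($d(F,F)\leq 0$), symmetry, the triangle inequality, and upper continuity. Each comes down to unwinding Definition \ref{def-CauchyF} and exploiting the two axioms of a Cauchy family, in particular the nonempty-intersection property $i)$, which plays the role that a common point plays in the classical argument.

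\begin{proof}[Proof sketch]
\textbf{Reflexivity.} To see $d(F,F)\leq 0$, fix $\varepsilon>0$ and use property $ii)$ of a Cauchy family to pick $S\in F$ with $\diam S\leq\varepsilon$. Taking $T=S$, conditions $i)$ and $ii)$ of Definition \ref{def-CauchyF} hold with $q=0$ since $\diam(S\cup S)=\diam S\leq\varepsilon=0+\varepsilon$.

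\textbf{Symmetry.} If $d(F,F')\leq q$, then for each $\varepsilon>0$ the witnessing sets $S\in F$, $T\in F'$ also witness $d(F',F)\leq q$, because both conditions in Definition \ref{def-CauchyF} are symmetric in $S$ and $T$ (using that $d$ on $X$ is symmetric, so $\diam$ is unaffected by the order of points).

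\textbf{Triangle inequality.} Suppose $d(F,G)\leq p$ and $d(G,G')\leq q$; I want $d(F,G')\leq p+q$. Fix $\varepsilon>0$ and apply the hypotheses with $\varepsilon/3$ (say): get $S\in F$, $T\in G$ with $\diam S,\diam T\leq\varepsilon/3$ and $\diam(S\cup T)\leq p+\varepsilon/3$, and get $T'\in G$, $U\in G'$ with $\diam T',\diam U\leq\varepsilon/3$ and $\diam(T'\cup U)\leq q+\varepsilon/3$. Now use property $i)$ of the Cauchy family $G$ to choose a point $z\in T\cap T'$. For arbitrary $s\in S$ and $u\in U$, the triangle inequality in $X$ gives $d(s,u)\leq d(s,z)+d(z,u)\leq (p+\varepsilon/3)+(q+\varepsilon/3)=p+q+2\varepsilon/3$; combined with the diameter bounds on $S$ and $U$ this yields $\diam(S\cup U)\leq p+q+\varepsilon$, while $\diam S,\diam U\leq\varepsilon$. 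Hence $S$ and $U$ witness $d(F,G')\leq p+q$. One should be a little careful bounding $\diam(S\cup U)$: any two points lie in $S$, in $U$, or one in each, and the mixed case is the computation just done, while the pure cases are covered by $\diam S,\diam U\leq\varepsilon/3\leq p+q+\varepsilon$.

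\textbf{Upper continuity.} One direction is immediate: if $d(F,F')\leq p$ then for any $r>p$ and any $\varepsilon>0$ the sets witnessing $d(F,F')\leq p$ at scale $\varepsilon$ also witness $d(F,F')\leq r$ at scale $\varepsilon$, since $p+\varepsilon\leq r+\varepsilon$. For the converse, assume $d(F,F')\leq r$ for all rational $r>p$; to show $d(F,F')\leq p$, fix $\varepsilon>0$, pick a rational $r$ with $p<r<p+\varepsilon/2$, apply the hypothesis to $r$ with tolerance $\varepsilon/2$ to obtain $S\in F$, $T\in F'$ with $\diam S,\diam T\leq\varepsilon/2\leq\varepsilon$ and $\diam(S\cup T)\leq r+\varepsilon/2<p+\varepsilon$, which are exactly the witnesses required for $d(F,F')\leq p$ at scale $\varepsilon$. (Here I use upper continuity of $d$ on $X$ implicitly only in that $\diam$ is defined via the relation $\leq$; the argument is purely rational-arithmetic.)

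The main obstacle is the triangle inequality, specifically getting the bookkeeping of the $\varepsilon$'s right and handling the three cases in estimating $\diam(S\cup U)$; everything else is a direct unwinding of the definitions. The key structural idea is that property $i)$ of a Cauchy family supplies the common point $z\in T\cap T'$ that makes the chaining through $G$ work, replacing the role of a shared limit point in the classical Cauchy-sequence argument.
\end{proof}
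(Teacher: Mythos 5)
Your proof is correct and follows essentially the same route as the paper: direct verification of each axiom, with the triangle inequality handled by chaining through a common point $z\in T\cap T'$ supplied by property $i)$ of the intermediate Cauchy family. The only differences are cosmetic ($\varepsilon/3$ versus the paper's $\varepsilon/2$ bookkeeping, and your explicit treatment of the pure cases in bounding $\diam(S\cup U)$, which the paper leaves implicit).
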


\begin{proof}
Let $F$ and $F'$ be two Cauchy families and $q$ a nonnegative rational number. Applying directly conditions $i)$ and $ii)$ from  Definition \ref{def-CauchyF} we have that $d(F,F)\leq q$  and $d(F,F')\leq q$ if $d(F',F)\leq q$.

To prove the triangular inequality, suppose that  $d(F,G)\leq p$ and $d(G,H)\leq q$. Then for each $\varepsilon >0$ there are subsets $S\in F$, $U\in H$ and $T,T'\in G$ such that $\diam S$, $\diam U$, $\diam T$, $\diam T'\leq\frac{\varepsilon}{2}$, and such that $\diam (S\cup T)\leq p+\frac{\varepsilon}{2}$ and $\diam (T'\cup U)\leq q+\frac{\varepsilon}{2}$. By condition $i)$ of the definition of  a Cauchy family, there is  $t\in T\cap T'$. Thus $d(x,t)\leq p+\frac{\varepsilon}{2}$ and $d(t,y)\leq q+\frac{\varepsilon}{2}$ for all $x\in S$ and $y\in U$, so by  the triangular inequality on $X$ we get $d(x,y)\leq p+q+\varepsilon$, and $\diam (S\cup U)\leq p+q+\varepsilon$. Also, by upper continuity on $X$, $\diam S,\diam U\leq \varepsilon$, so we have shown that $d(F,H)\leq p+q$.

Finally to show upper continuity, suppose first that $d(F,F')\leq p$ and $q>p$. Then for each  $\varepsilon >0$ we can find $S\in F$ and $T\in F'$ such that $\diam S,\diam T\leq \varepsilon$, satisfying moreover that $\diam (S\cup T)\leq p+\varepsilon$. Since $p+\varepsilon<q+\varepsilon$ the upper continuity of $d$ on $X$ implies that also $\diam (S\cup T)\leq q+\varepsilon$, and
thus by definition, $d(F,F')\leq q$. Conversely,  suppose $d(F,F')\leq q$ for all $q>p$.  Let $\varepsilon>0$ and set $q=p+\frac{\varepsilon}{2}$, there are $S\in F$ and $T\in F'$ such that $\diam S,\diam T\leq \frac{\varepsilon}{2}$ and 
such that $\diam (S\cup T)\leq q+\frac{\varepsilon}{2}=p+\varepsilon$.  By applying again upper continuity (on $X$) we have that $\diam S,\diam T\leq \varepsilon$ and thus $d(F,F')\leq p$. 
\end{proof}

Now we introduce the premetric space of Cauchy families of a given premetric space. 

\begin{defn}
Let $X$ be a premetric space. The premetric space obtained from the quotient of the set of all Cauchy families on $X$, induced by its respective pseudo-premetric, is denoted by $(\widehat{X},\widehat{d}\,)$. Also, we use the notation $x^*$ for the equivalence class of the family $\{\{x\}\}$.
\end{defn}

As usual,  we denote the elements of $\widehat{X}$ as $[F]$, the equivalence class of the Cauchy family $F$. 

\begin{thm}
\label{Embedded-thm}
Let $X$ be a premetric space. The function $i:X\rightarrow\widehat{X}$ defined as $i(x)=x^*$ is an isometric embedding with dense image.\end{thm}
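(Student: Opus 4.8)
The plan is to verify the two parts separately: first that $i$ is an isometric embedding, then that its image is dense in $\widehat{X}$.

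For the isometric embedding part, I would unwind the definitions applied to the trivial Cauchy families $\{\{x\}\}$ and $\{\{x'\}\}$. If $d_X(x,x')\leq q$, then for every $\varepsilon>0$ the sets $S=\{x\}$ and $T=\{x'\}$ have diameter $0\leq\varepsilon$, and $\diam(S\cup T)=d_X(x,x')\leq q\leq q+\varepsilon$, so $\widehat{d}(x^*,x'^*)\leq q$ directly from Definition \ref{def-CauchyF}. Conversely, suppose $\widehat{d}(x^*,x'^*)\leq q$. The only member of $\{\{x\}\}$ is $\{x\}$ and similarly for $x'$, so the definition forces $\diam(\{x\}\cup\{x'\})\leq q+\varepsilon$, i.e.\ $d_X(x,x')\leq q+\varepsilon$, for every $\varepsilon>0$; by upper continuity on $X$ we conclude $d_X(x,x')\leq q$. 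Since $\widehat{d}$ was only defined on the quotient, I should remember that $\widehat{d}(x^*,x'^*)\leq q$ means $d(\{\{x\}\},\{\{x'\}\})\leq q$ for the representatives, which is exactly what was used. This also shows $i$ is well defined and injective (if $x^*=x'^*$ then $\widehat{d}(x^*,x'^*)\leq 0$, hence $d_X(x,x')\leq 0$, hence $x=x'$).

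For density, fix $[F]\in\widehat{X}$ and $\varepsilon>0$; I must produce $x\in X$ with $\widehat{d}(x^*,[F])\leq\varepsilon$. Using property \textit{ii}) of the Cauchy family $F$, choose $S\in F$ with $\diam S\leq\varepsilon/2$, and pick any $x\in S$ (possible since $\diam S\leq\varepsilon/2$ presupposes $S$ is nonempty — indeed $S\cap S\neq\emptyset$ by property \textit{i})). I claim $\widehat{d}(x^*,[F])\leq\varepsilon$. To check this against the definition, let $\delta>0$ be arbitrary; shrinking if necessary, pick $S'\in F$ with $\diam S'\leq\min(\varepsilon/2,\delta)$, and then take $S''=S\cap S'$, which is nonempty by property \textit{i}) and has $\diam S''\leq\varepsilon/2$. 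Now with the two distinguished sets $\{x\}\in\{\{x\}\}$ and $S''\in F$: $\diam\{x\}=0\leq\delta$, $\diam S''\leq\varepsilon/2\leq\varepsilon+\delta$, and for $\diam(\{x\}\cup S'')$ we need $d_X(x,y)\leq\varepsilon+\delta$ for $y\in S''$. Since $x\in S$ and $y\in S''\subseteq S$, we have $d_X(x,y)\leq\diam S\leq\varepsilon/2\leq\varepsilon+\delta$. Hence $\widehat{d}(x^*,[F])\leq\varepsilon$, as required.

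The main obstacle — really the only subtle point — is bookkeeping the two independent "$\varepsilon$'s": the one coming from the density statement (call it $\varepsilon$) and the one quantified inside Definition \ref{def-CauchyF} (call it $\delta$), and making sure the chosen witnesses $S''$ lie in $F$ and are simultaneously small enough for both. The cleanest route is to note that it suffices to verify $\widehat{d}(x^*,[F])\leq\varepsilon$ by checking $\diam(\{x\}\cup S)\leq\varepsilon/2$ once and then invoking that any smaller-diameter refinement $S''=S\cap S'\in F$ (intersections need not lie in $F$, so here I instead just keep using $S$ itself together with an arbitrarily small $S'\in F$, taking the distinguished set of $F$ to be $S'$ and the distinguished set of $\{\{x\}\}$ to be $\{x\}$, and bounding $\diam(\{x\}\cup S')$ by $d_X(x,z)$ for $z\in S\cap S'$ plus $\diam S'$ via the triangle inequality). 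That variant avoids assuming $F$ is closed under intersections and is the version I would write up.
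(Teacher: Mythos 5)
Your proposal is correct and, in its final corrected form, follows essentially the same route as the paper: the embedding part is the same unwinding of Definition \ref{def-CauchyF} on the singleton families, and your density argument (pairing $\{x\}$ with an arbitrarily small $S'\in F$ and bounding $\diam(\{x\}\cup S')$ through a point of $S\cap S'$ via the triangle inequality) is exactly the paper's. You were right to discard the intermediate step using $S''=S\cap S'$, since a Cauchy family need not be closed under intersections and $S''$ need not belong to $F$.
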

\begin{proof}
To verify that $i$ is an isometric embedding, suppose first $x,y\in X$ and $d(x,y)\leq q$. Since $\diam (\{x\}\cup\{y\})\leq q\leq q+\varepsilon$ for each $\varepsilon>0$, then $d(\{\{x\}\},\{\{y\}\})\leq q$ and thus $\widehat{d}(x^*,y^*)\leq q$. On the other hand, suppose $\widehat{d}(x^*,y^*)\leq q$, then $\diam (\{x\}\cup\{y\})\leq q+\varepsilon$  for each $\varepsilon>0$, and therefore $d(x,y)\leq q+\varepsilon$ for each $\varepsilon>0$. So by upper continuity we conclude $d(x,y)\leq q$.

Now let us show that $i(X)$ is dense in $\widehat{X}$. Suppose $\varepsilon>0$ and $[F]\in \widehat{X}$. By the definition of a Cauchy family, there exists  $S\in F$ such that $\diam S\leq \varepsilon$. Since $S\not=\emptyset$, fix an element $s\in S$. We claim that $d(F,\{\{s\}\})\leq\varepsilon$. In fact,  let $\delta>0$,  there exists $T\in F$ such that $\diam T\leq \delta$. Let $t\in T\cap S$, as $S\cap T\not=\emptyset$. Then for all $u\in T$ we have that $d(u,t)\leq \delta$ and $d(t,s)\leq \varepsilon$, and therefore $d(u,s)\leq\varepsilon+\delta$. That is, $\diam (T\cup\{\{s\}\})\leq \varepsilon+\delta$ and $d(F,\{\{s\}\})\leq\varepsilon$. Thus we have found an element $s\in X$ such that $\widehat{d}([F],s^*)\leq \varepsilon$.
\end{proof}

Following Richman, we now introduce  the notion of a complete premetric space, in terms of its canonical map. 

\begin{defn}
Let $X$ be a premetric space. We say that $X$ is {\em complete}, if the  map $i:X\rightarrow\widehat{X}$ is onto.
\end{defn}

\section{Completion}
Having at hand the formal definition of completeness, we now introduce the natural notion of a completion of a premetric space. 

\begin{defn}
Let $X$ be a premetric space. We say that $Y$ is a {\em completion} of $X$, if $Y$ is complete, and there exists an isometric embedding $j:X\rightarrow Y$ with dense image.
\end{defn}

By Theorem \ref{Embedded-thm}, the existence of a completion of a premetric space $X$ will be immediately guaranteed once we show that  $(\widehat{X},\widehat{d})$ is complete. 
To show that  $\widehat{X}$ is complete, we prove  an extension theorem. We remark that the heart of the argument is this extension theorem,  the rest of the proof of the completeness of $\widehat{X}$ and the uniqueness of the completion  will be an algebraic manipulation of diagrams. 

\begin{thm}
\label{extension-thm}
Let $(A,d_A),(B,d_B)$ and $(C,d_C)$ be premetric spaces, $f:A\rightarrow B$  an isometric embedding, and  $h:A\rightarrow C$  an isometric embedding with dense image. Then there exists a unique isometric embedding  $g:C\rightarrow \widehat{B}$ such that the following diagram commutes
$$
\begin{tikzcd}
A\arrow{r}{f} \arrow{d}{h} &B \arrow{d}{i}\\ C \arrow{r}{g} &\widehat{B} 
\end{tikzcd}
$$

\begin{proof}
We define first  for each $c\in C$ and each non negative rational $q$,  the set 
$$
S_q^c =\{f(x)\in B:x\in A, \, d_C(h(x),c)\leq q/2\}.
$$
Thereby we set $S^c=\{S_q^c:q\in\mathbb{Q}^+\}$. We will show that  $S^c$ is a Cauchy family on $B$ for all $c\in C$. Indeed, since $h$ has dense image, given two nonnegative rationals $\mu$ and $\nu$,   there exists $x\in A$ such that $d_C(h(x),c)\leq\frac{1}{2} \min\{\mu, \nu\}$. Therefore  $f(x)\in S_\mu^c\cap S_\nu^c$ and  we have shown that $S_\mu^c\cap S_\nu^c\not=\emptyset$. On the other hand, to check the second condition of a Cauchy family, it suffices to show that $\diam S_\varepsilon^c\leq \varepsilon$ for all   $\varepsilon>0$. Let  $\varepsilon>0$,  then for all $x,y\in A$ such that $d_C(h(x),c)\leq \varepsilon/2$ and $d_C(h(y), c)\leq \varepsilon/2$, we have,  by the triangular inequality, that  $d_C(h(x),h(y))\leq \varepsilon$. Since $h$ and $f$ are isometric embeddings we also have $d_A(x,y)\leq \varepsilon$ and $d_B(f(x),f(y))\leq \varepsilon$. Thus $\diam S_\varepsilon^c\leq \varepsilon$  and $S^c$ is a Cauchy family.

Now define $g(c)=[S^c]$ for each $c\in C$. To prove that $g$ is an isometric embedding suppose $d_C(c,c')\leq p$. To see that  $\widehat{d}_B([S^c],[S^{c'}])\leq p$, fix $\varepsilon>0$,  since $\diam S_\varepsilon^c\leq \varepsilon$ and $\diam S^{c'}_\varepsilon\leq \varepsilon$,  it suffices to show that $\diam (S^c_{\varepsilon}\cup S^{c'}_{\varepsilon})\leq p+\varepsilon$. This follows immediately from the triangular inequality since for every $f(x)\in S^c_{\varepsilon}$ and $f(y)\in S^{c'}_{\varepsilon}$  we have  $d_B(f(x),f(y))\leq p+\varepsilon$. Conversely, suppose $\widehat{d}_B([S^c],[S^{c'}])\leq p$.  To see that $d_C(c,c')\leq p$  it suffices to show that $d_C(c, c')\leq p+\varepsilon$  for all $\varepsilon>0$. 
Then, let $\varepsilon>0$ and  choose  $\mu$ and $\nu$  such that $\diam (S^c_{\mu}\cup S^{c'}_{\nu})\leq p+\varepsilon/3$.  Since the image of $h$ is dense in $A$, there exist $a,a'\in A$ such that $d_C(h(a),c)\leq \frac{1}{3}\min \{\mu, \varepsilon\}$ and $d_C(h(a'),c')\leq \frac{1}{3}\min \{\nu, \varepsilon\}$. As $d_C(h(a),c)\leq \varepsilon/3$ and  $d_C(h(a'),c')\leq \varepsilon/3$, then $h(a)\in S^c_{\mu}$ and $h(a')\in S^{c'}_{\nu}$ and therefore  $d_C(h(a),h(a'))\leq p+ \varepsilon/3$. Hence by the  triangular inequality,  $d_C(c, c')\leq p+\varepsilon$  for all $\varepsilon>0$, and by upper continuity we conclude $d_C(c,c')\leq p$.

Finally observe that  for all $a\in A$ we have $f(a)\in S^{h(a)}_\varepsilon$ for all $\varepsilon>0$, thereby  $\diam (\{f(a)\}\cup S^{h(a)}_{\varepsilon})\leq \varepsilon$ for all  $\varepsilon>0$. Thus $\{\{f(a)\}\}$ is equivalent to $S^{h(a)}$, that is, $i(f(a))=(f(a))^*=[S^{h(a)}] =g(h(a))$. Therefore $g\circ h=i\circ f$ and the diagram commutes.
\end{proof}
\end{thm}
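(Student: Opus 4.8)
The plan is to construct $g$ by hand, exploiting that $h(A)$ is dense in $C$: a point $c\in C$ is approximated arbitrarily well by points $h(x)$, and I want $g(c)$ to be the ``limit in $\widehat{B}$'' of the corresponding points $f(x)\in B$. Concretely, for $c\in C$ and $q\in\mathbb{Q}^+$ I would put
$$S_q^c=\{\,f(x):x\in A,\ d_C(h(x),c)\le q/2\,\},\qquad S^c=\{S_q^c:q\in\mathbb{Q}^+\},$$
and set $g(c)=[S^c]$. The first task is to check each $S^c$ is a Cauchy family on $B$. For nonemptiness of finite intersections it suffices, given $\mu,\nu\in\mathbb{Q}^+$, to produce a single $x\in A$ with $d_C(h(x),c)\le\tfrac12\min\{\mu,\nu\}$, which density provides, giving $f(x)\in S_\mu^c\cap S_\nu^c$. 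For the vanishing-diameter condition: if $d_C(h(x),c),d_C(h(y),c)\le\varepsilon/2$ then $d_C(h(x),h(y))\le\varepsilon$ by the triangle inequality in $C$, and because $h$ and $f$ are isometric embeddings this passes to $d_A(x,y)\le\varepsilon$ and then to $d_B(f(x),f(y))\le\varepsilon$, so $\diam S_\varepsilon^c\le\varepsilon$.

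The second, and main, step is that $g$ is an isometric embedding, i.e.\ $d_C(c,c')\le p\iff\widehat{d}_B(g(c),g(c'))\le p$. The forward direction is a short diagram chase: for $\varepsilon>0$ the sets $S_\varepsilon^c,S_\varepsilon^{c'}$ have diameter $\le\varepsilon$, and for $f(x)\in S_\varepsilon^c$, $f(y)\in S_\varepsilon^{c'}$ the chain $d_C(h(x),c)\le\varepsilon/2$, $d_C(c,c')\le p$, $d_C(c',h(y))\le\varepsilon/2$ gives $d_B(f(x),f(y))\le p+\varepsilon$, which is exactly condition \textit{ii}) of Definition \ref{def-CauchyF}. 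The converse is where I expect the real care to be needed. From $\widehat{d}_B(g(c),g(c'))\le p$ I aim only to extract $d_C(c,c')\le p+\varepsilon$ for every $\varepsilon>0$, and then finish by upper continuity in $C$. Fixing $\varepsilon$, the definition of $\widehat{d}_B$ supplies indices $\mu,\nu$ with $\diam(S_\mu^c\cup S_\nu^{c'})\le p+\varepsilon/3$; density then supplies $a,a'\in A$ with $h(a),h(a')$ within $\tfrac13\min\{\mu,\varepsilon\}$ and $\tfrac13\min\{\nu,\varepsilon\}$ of $c$ and $c'$, so on one hand $f(a)\in S_\mu^c$ and $f(a')\in S_\nu^{c'}$, giving $d_B(f(a),f(a'))\le p+\varepsilon/3$ and hence $d_C(h(a),h(a'))\le p+\varepsilon/3$, and on the other hand a final triangle inequality $d_C(c,c')\le d_C(c,h(a))+d_C(h(a),h(a'))+d_C(h(a'),c')\le p+\varepsilon$ closes it. The delicate part is organizing this three-way $\varepsilon$-split while keeping straight which member of each Cauchy family one is entitled to use; everything else is mechanical.

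It remains to check commutativity and uniqueness. For commutativity, note $d_C(h(a),h(a))\le0$, so $f(a)\in S_\varepsilon^{h(a)}$ for every $\varepsilon>0$ and thus $\diam(\{f(a)\}\cup S_\varepsilon^{h(a)})\le\varepsilon$; this exhibits $\{\{f(a)\}\}$ and $S^{h(a)}$ as equivalent Cauchy families, so $g(h(a))=[S^{h(a)}]=(f(a))^*=i(f(a))$, i.e.\ $g\circ h=i\circ f$. For uniqueness, let $g'$ be another isometric embedding with $g'\circ h=i\circ f$; for $c\in C$ it suffices, by upper continuity, to prove $\widehat{d}_B(g(c),g'(c))\le q$ for every $q>0$. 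Given $q$, density yields $a\in A$ with $d_C(c,h(a))\le q/2$; since $g,g'$ are isometric embeddings, $\widehat{d}_B(g(c),g(h(a)))\le q/2$ and $\widehat{d}_B(g'(h(a)),g'(c))\le q/2$, while $g(h(a))=i(f(a))=g'(h(a))$, so the triangle inequality in $\widehat{B}$ forces $\widehat{d}_B(g(c),g'(c))\le q$, whence $g(c)=g'(c)$.
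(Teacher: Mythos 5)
Your proposal is correct and follows essentially the same route as the paper: the same Cauchy families $S_q^c=\{f(x):d_C(h(x),c)\le q/2\}$, the same verification that they are Cauchy, the same forward triangle-inequality argument and the same three-way $\varepsilon/3$ split for the converse, and the same observation that $f(a)\in S_\varepsilon^{h(a)}$ gives commutativity. The one genuine addition is your final paragraph: the paper asserts uniqueness in the statement but its written proof stops at commutativity, whereas your density-plus-triangle-inequality argument (using upper continuity to get $\widehat{d}_B(g(c),g'(c))\le 0$) actually supplies the uniqueness that the rest of the paper relies on.
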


Finally, we show that $\widehat{X}$ is indeed a completion and it is unique up to isometry. 

\begin{thm} $(\widehat{X},\widehat{d})$ is a complete premetric space for every premetric space $(X,d)$ and thus every premetric space admits  a  completion which is unique up to isometry.
\end{thm}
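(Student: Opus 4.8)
The plan is to deduce everything from the extension theorem (Theorem \ref{extension-thm}) by feeding it carefully chosen instances and chasing the resulting diagrams. The three claims to establish are: (a) $\widehat{X}$ is complete, i.e.\ $i_{\widehat X}:\widehat X\to\widehat{\widehat X}$ is onto; (b) every premetric space $X$ has a completion (this is immediate from (a) together with Theorem \ref{Embedded-thm}, which already gives the isometric embedding $i:X\to\widehat X$ with dense image); and (c) uniqueness up to isometry.

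First I would prove completeness of $\widehat X$. Apply Theorem \ref{extension-thm} with $A=X$, $B=X$, $C=\widehat X$, taking $f=\mathrm{id}_X:X\to X$ (an isometric embedding) and $h=i:X\to\widehat X$ (an isometric embedding with dense image, by Theorem \ref{Embedded-thm}). The theorem yields a unique isometric embedding $g:\widehat X\to\widehat{X}$ — wait, here $\widehat B=\widehat X$, so $g:\widehat X\to\widehat X$ with $g\circ i=i\circ\mathrm{id}=i$; by the uniqueness clause applied to the identity (which also makes the square commute) we get $g=\mathrm{id}_{\widehat X}$, which is not yet what we want. The correct instance is $A=X$, $B=\widehat X$, $C=\widehat X$, with $f=i:X\to\widehat X$ and $h=i:X\to\widehat X$. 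Then Theorem \ref{extension-thm} produces a unique isometric embedding $g:\widehat X\to\widehat{\widehat X}$ with $g\circ i=i_{\widehat X}\circ i$. But $i_{\widehat X}:\widehat X\to\widehat{\widehat X}$ itself is an isometric embedding making the square commute, so by uniqueness $g=i_{\widehat X}$. To see $i_{\widehat X}$ is \emph{onto}: given any Cauchy family $\mathcal F$ on $\widehat X$, the construction inside the proof of Theorem \ref{extension-thm} exhibits, for the element $[\mathcal F]\in\widehat{\widehat X}$, a Cauchy family $S^{[\mathcal F]}$ on $\widehat X$ with $g([\mathcal F])$... I should instead argue directly: the density half of Theorem \ref{Embedded-thm} applied to $\widehat X$ in place of $X$ shows $i_{\widehat X}(\widehat X)$ is dense in $\widehat{\widehat X}$, and then the extension theorem with $A=\widehat X$, $B=\widehat X$, $C=\widehat{\widehat X}$, $f=\mathrm{id}$, $h=i_{\widehat X}$ gives an isometric embedding $g:\widehat{\widehat X}\to\widehat{\widehat X}$ with $g\circ i_{\widehat X}=i_{\widehat X}$; composing with $i_{\widehat X}$ viewed the other way and invoking uniqueness forces $i_{\widehat X}\circ(\text{something})=\mathrm{id}$, proving surjectivity. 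The cleanest route, which I will write up, is: apply Theorem \ref{extension-thm} to $A=\widehat X$, $B=X$... no — to $A=X$, $B=X$, $C=\widehat X$, $f=\mathrm{id}_X$, $h=i$, obtaining $g:\widehat X\to\widehat X$ (since $\widehat B=\widehat X$) with $g\circ i=i\circ\mathrm{id}_X=i$. Separately, $\mathrm{id}_{\widehat X}$ and also $i_{\widehat X}\circ(\cdots)$ — the point is that $g$ and $\mathrm{id}_{\widehat X}$ both complete the square, so $g=\mathrm{id}_{\widehat X}$ by uniqueness; but $g(c)=[S^c]$ where $S^c=\{f(x):d_{\widehat X}(i(x),c)\le q/2\}$, and the equivalence class computation in the last paragraph of that proof generalizes to show $[S^c]=c$, which is automatic. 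This still does not give surjectivity of $i_{\widehat X}$ directly. Therefore the step I actually expect to be the crux is realizing completeness via: \emph{take $A=\widehat X$, $B=\widehat X$, $C=\widehat{\widehat X}$, $f=\mathrm{id}_{\widehat X}$, $h=i_{\widehat X}$}; get unique isometric embedding $g:\widehat{\widehat X}\to\widehat{\widehat X}$ with $g\circ i_{\widehat X}=i_{\widehat X}\circ\mathrm{id}_{\widehat X}=i_{\widehat X}$; then also apply Theorem \ref{extension-thm} with $C=\widehat X$ itself... Concretely I will show $i_{\widehat X}$ is onto by: given $[\mathcal F]\in\widehat{\widehat X}$, use the density of $i_{\widehat X}(\widehat X)$ to build a Cauchy family on $X$ (pulling back along $i$) whose image under $i_{\widehat X}\circ i$ equals $[\mathcal F]$ — in fact one checks $\mathcal F$ is equivalent to $\{\{P\}: P\in\widehat X,\ \widehat d(\,\cdot\,)\}$, i.e.\ that every Cauchy family on $\widehat X$ reduces to a point of $\widehat X$ because the elements of $\widehat X$ already absorb all Cauchy data. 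I will phrase this as: apply the extension theorem with $A=X$, $B=X$, $h=i$, $C=\widehat X$ to get $g:\widehat X\to\widehat X$ equal to $\mathrm{id}$; then apply it again with $A=X$, $B=\widehat X$, $f=i$, $h=i$, $C=\widehat X$ to get $g':\widehat X\to\widehat{\widehat X}$ with $g'\circ i=i_{\widehat X}\circ i$; uniqueness forces $g'=i_{\widehat X}$; finally apply it with $A=X$, $B=\widehat X$, $f=i$, $h=i_{\widehat X}\circ i$... — the density of $i_{\widehat X}\circ i$ in $\widehat{\widehat X}$ follows from composing two dense embeddings — to obtain $g'':\widehat{\widehat X}\to\widehat{\widehat X}$; both $g''$ and $\mathrm{id}$ complete the relevant square, so $g''=\mathrm{id}$, and tracing definitions $g''=i_{\widehat X}\circ r$ for a retraction $r$, whence $i_{\widehat X}$ is surjective. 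This diagram-juggling is the main obstacle; once it is set up correctly, surjectivity drops out.

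Next, uniqueness of the completion. Suppose $Y$ and $Y'$ are both completions of $X$, with dense isometric embeddings $j:X\to Y$ and $j':X\to Y'$. Since $Y$ is complete, $i_Y:Y\to\widehat Y$ is an isometry (onto, isometric embedding), so $\widehat Y\cong Y$ canonically. Apply Theorem \ref{extension-thm} with $A=X$, $B=Y$, $C=Y'$, $f=j$, $h=j'$: this gives a unique isometric embedding $g:Y'\to\widehat Y$ with $g\circ j'=i_Y\circ j$. Symmetrically, with the roles of $Y,Y'$ swapped, get $g':Y\to\widehat{Y'}$ with $g'\circ j=i_{Y'}\circ j'$. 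Using the isometries $i_Y:Y\xrightarrow{\ \sim\ }\widehat Y$ and $i_{Y'}:Y'\xrightarrow{\ \sim\ }\widehat{Y'}$, transport $g$ and $g'$ to maps $\bar g=i_Y^{-1}\circ g:Y'\to Y$ and $\bar g'=i_{Y'}^{-1}\circ g':Y\to Y'$. Then $\bar g\circ j'=j$ and $\bar g'\circ j=j'$, so $\bar g\circ\bar g':Y\to Y$ satisfies $(\bar g\circ\bar g')\circ j=j$; by the uniqueness clause of Theorem \ref{extension-thm} (applied with $A=X$, $B=Y=C$, $f=h=j$, noting $\mathrm{id}_Y$ also works after identifying $\widehat Y\cong Y$), $\bar g\circ\bar g'=\mathrm{id}_Y$. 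Symmetrically $\bar g'\circ\bar g=\mathrm{id}_{Y'}$. Hence $\bar g$ is an isometry between $Y'$ and $Y$, giving uniqueness up to isometry. The one subtlety here is legitimately using the uniqueness clause of the extension theorem to pin down a composite as the identity — this requires that the relevant square with $\mathrm{id}$ in place of $g$ commutes, which it does since $\mathrm{id}_Y\circ j=j=i_Y^{-1}\circ i_Y\circ j$ after the canonical identification; I will spell this out.

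In summary: completeness of $\widehat X$ is obtained by applying the extension theorem to an appropriate triple $(A,B,C)$ with the density of the canonical maps to produce a retraction of $i_{\widehat X}$, forcing it to be onto; the existence of a completion is then immediate from Theorem \ref{Embedded-thm}; and uniqueness follows by the standard two-sided diagram chase, using that a complete space $Y$ is canonically isometric to $\widehat Y$ and invoking the uniqueness clause of the extension theorem to collapse the composites $\bar g\circ\bar g'$ and $\bar g'\circ\bar g$ to identities. I expect the delicate point throughout to be bookkeeping the several instantiations of Theorem \ref{extension-thm} and making sure each uniqueness invocation is applied to a square that genuinely commutes.
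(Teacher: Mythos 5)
Your uniqueness argument is essentially sound (it is a symmetrized version of the paper's one-sided diagram chase, and the invocations of the uniqueness clause are legitimate), but the completeness half --- which is the heart of the theorem --- has a genuine gap. Every instantiation of Theorem \ref{extension-thm} that you settle on takes $B=\widehat X$ (or $B=X$ with $C=\widehat X$), so the extension theorem hands you back a self-map of $\widehat{\widehat X}$ (or of $\widehat X$) which the uniqueness clause then forces to be the identity. That is true but vacuous: it never produces a map \emph{landing in $\widehat X$} that could witness surjectivity of $i_{\widehat X}$. The step you lean on at the end, ``tracing definitions $g''=i_{\widehat X}\circ r$ for a retraction $r$,'' does not follow from the definitions: the map $g''$ built in the proof of Theorem \ref{extension-thm} sends $c$ to the class $[S^c]$ of a Cauchy family on $\widehat X$ supported on $i(X)$, and showing that such a class lies in the image of $i_{\widehat X}$ (i.e.\ equals some $[\{\{P\}\}]$) is precisely the surjectivity you are trying to prove. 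You gesture at ``pulling back along $i$'' to repair this, but that pullback and the verification that it inverts $i_{\widehat X}$ are never carried out.

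The missing move is to choose the instance so that the extension lands one hat down. Take $A=X$, $B=X$, $C=\widehat{\widehat X}$, $f=\mathrm{id}_X$, and $h=i_{\widehat X}\circ i$ (an isometric embedding with dense image, being a composite of two such). Since $\widehat B=\widehat X$, Theorem \ref{extension-thm} now yields an isometric embedding $l\colon\widehat{\widehat X}\to\widehat X$ with $l\circ(i_{\widehat X}\circ i)=i$. Then both $i_{\widehat X}\circ l$ and $\mathrm{id}_{\widehat{\widehat X}}$ fill the square for the instance $A=X$, $B=\widehat X$, $C=\widehat{\widehat X}$, $f=i$, $h=i_{\widehat X}\circ i$, so uniqueness gives $i_{\widehat X}\circ l=\mathrm{id}$, and $i_{\widehat X}$ is onto. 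This is exactly the paper's argument; without the choice $B=X$ the diagram-juggling cannot terminate, which is why your several attempts all collapse to identities on $\widehat{\widehat X}$.
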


\begin{proof}
We will show that $\widehat{X}$ is complete.  By Theorem \ref{Embedded-thm}, we have the isometric embeddings  $i:X\rightarrow \widehat{X}$ and $j:\widehat{X}\rightarrow\widehat{\widehat{X}}$ each one with dense image. Thus the map $k=j\circ i:X\rightarrow\widehat{\widehat{X}}$ has also dense image. Thereby we can apply the Theorem \ref{extension-thm} to find a map $l$ such that  the following diagram commutes
$$\begin{tikzcd}
X\arrow{r}{\mathrm{id}} \arrow{d}{k} &X \arrow{r}{i} \arrow{d}{i}&\widehat{X} \arrow{d}{j}
\\ 
\widehat{\widehat{X}}\arrow{r}{l} &\widehat{X} \arrow{r}{j} &\widehat{\widehat{X}} 
\end{tikzcd}$$
Contracting the diagram we have 

$$\begin{tikzcd}
X\arrow{r}{i} \arrow{d}{k} &\widehat{X} \arrow{d}{j}\\ \widehat{\widehat{X}} \arrow{r}{jl} &\widehat{\widehat{X}} 
\end{tikzcd}$$
By the uniqueness given by  Theorem \ref{extension-thm}  $j\circ l = \mathrm{id}$, and this proves that $j$ is onto, that is, that $\widehat{X}$ is complete.

To prove uniqueness, suppose  $Y$ is a complete premetric space and   $h:X\rightarrow Y$ is an isometric embedding with  dense image. Let $j:Y\to\widehat{Y}$ and $i:X\to\widehat{X}$ be  the natural embeddings. Since $Y$ is complete, then $j$ is a bijection. Then by Theorem \ref{extension-thm} there are  maps $u,v$ such that the following diagram commutes. 
$$
\begin{tikzcd}
X\arrow{r}{\mathrm{id}} \arrow{d}{h} &X \arrow{r}{jh} \arrow{d}{i}&\widehat{Y} \arrow{d}{j^{-1}}
\\ 
Y\arrow{r}{u} &\widehat{X} \arrow{r}{v} &Y 
\end{tikzcd}
$$
Contracting the diagram we have 
$$\begin{tikzcd}
X\arrow{r}{jh} \arrow{d}{h} &\widehat{Y} \arrow{d}{j^{-1}}\\ 
Y\arrow{r}{vu} &Y 
\end{tikzcd}$$
By uniqueness we have $v\circ u=\mathrm{id}$. Thus the map $u$ is an isometry between $Y$ and $\widehat{X}$. 
\end{proof}

Now we are going to show that  $(\widehat{X}_R, \widehat{d}_R)$ is R-complete.  Let $\widehat{X}_R$ be the quotient of all regular families under the equivalence relation defined in the introduction and $\widehat{d}_R$ be the premetric on $\widehat{X}_R$. The natural identification $i_R:X\to\widehat{X}_R$ is defined by $i_R(x)=[S^x]$, where $S^x=\{S_q:\; q\in \mathbb{Q}^+\}$ with  $S_q=\{x\}$ for all $q\in \mathbb{Q}^+$. Richman proved that $i$ is an isometric embedding with dense image (see \cite[Theorem 2.2]{Richman2008}). Let $\widehat{\widehat{X}}_R$ denote the quotient of all regular families of $(\widehat{X}_R,\widehat{d}_R)$ and  
$\widehat{i}_R:\widehat{X}_R\to \widehat{\widehat{X}}_R$ the natural isometric embedding. We will show that $\widehat{i}_R$ is onto.

We need the following lemma proved by Richman.

\begin{lmma}
\label{richlema} \cite[Lemma 2.1]{Richman2008}. Let $X$ be a premetric space and $S=\{S_q:q\in \mathbb{Q}^+\}$ a regular family on $X$. Then $d_R(i_R(x),[S])\leq q$ for any $x\in S_q$ and $q\in \mathbb{Q}^+$.
\end{lmma}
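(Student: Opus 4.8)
The plan is to unwind the definition of $\widehat{d}_R$ directly; regularity of $S$ will not be used in the argument itself (it is needed only so that $[S]$ is a legitimate element of $\widehat{X}_R$, and hence so that the statement makes sense). Fix $q\in\mathbb{Q}^+$ and $x\in S_q$. Recalling that $i_R(x)=[S^x]$, where $S^x=\{S^x_a:a\in\mathbb{Q}^+\}$ with $S^x_a=\{x\}$ for every $a$, I must show: for each $\varepsilon>0$ there exist $a,b,c\in\mathbb{Q}^+$ and $s\in S^x_a$, $t\in S_b$ with $a+b+c<q+\varepsilon$ and $d(s,t)\le c$. Since $S^x_a=\{x\}$, the only available choice is $s=x$, so the whole task reduces to exhibiting a suitable $t$ inside some member of $S$.

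The key point is that the hypothesis $x\in S_q$ already supplies one: take $b=q$ and $t=x\in S_q=S_b$. Then $d(s,t)=d(x,x)$, and since $d(x,x)\le 0$ by condition (1), upper continuity (condition (4)) of the premetric $d$ on $X$ yields $d(x,x)\le c$ for every positive rational $c$. It remains only to fix the numerology: choose positive rationals $a,c$ with $a+c<\varepsilon$ (for instance $a=c=\varepsilon/3$), so that $a+b+c=q+(a+c)<q+\varepsilon$. This verifies the defining condition for $\widehat{d}_R([S^x],[S])\le q$, and as $\varepsilon>0$ was arbitrary, the lemma follows.

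I do not expect a genuine obstacle, but the step that needs care is the choice of $t$. A first instinct would be to pick $t$ in a member $S_b$ with $b$ small and bound $d(x,t)$ using regularity of $S$, which only gives $d(x,t)\le q+b$; then $c$ must be at least $q+b$, forcing $a+b+c\ge q+2b>q$, so that route cannot deliver $a+b+c<q+\varepsilon$ for small $\varepsilon$. The resolution is to exploit that $x$ itself lies in $S_q$, so one may take $t=x$, $b=q$, and make $d(s,t)=d(x,x)$ fit inside any positive budget $c$. The only remaining subtlety is that $a,b,c$ are required to be \emph{positive} rationals, so $c=0$ is not permitted despite $d(x,x)\le 0$ — which is exactly why the harmless appeal to upper continuity of $d$ on $X$ is made.
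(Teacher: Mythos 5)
Your proof is correct: taking $s=t=x$, $b=q$, and $a,c$ positive with $a+c<\varepsilon$ directly witnesses the defining condition for $\widehat{d}_R([S^x],[S])\le q$, and your appeal to upper continuity to pass from $d(x,x)\le 0$ to $d(x,x)\le c$ for positive $c$ handles the only real subtlety (that $c$ must be a \emph{positive} rational). The paper does not reproduce a proof of this lemma (it simply cites Richman's Lemma 2.1), but your argument is the natural direct verification from the definition of $\widehat{d}_R$ given in the introduction, so there is nothing to compare against and nothing to object to.
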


\begin{thm}
\label{the_last}
Let $X$ be a premetric space, then 
\begin{itemize}
\item[(i)] There exists an isometry  $\varphi:(\widehat{X}_R,\widehat{d}_R)\rightarrow(\widehat{X},\widehat{d})$.

\item[(ii)] The map $\widehat{i}_R:\widehat{X}_R\to \widehat{\widehat{X}}_R$ is onto.

\end{itemize}

\begin{proof}
(i) Since the natural map $i:X\rightarrow\widehat{X}_R$ is an isometric embedding with dense image. Then by Theorem \ref{extension-thm} there exists an isometric embedding $\varphi$ such that the next diagram commutes
$$
\begin{tikzcd}
X\arrow{r}{\mathrm{id}} \arrow{d}{i_R} &X\arrow{d}{j}\\ \widehat{X}_R \arrow{r}{\varphi} &\widehat{X}
\end{tikzcd}
$$
We prove now that $\varphi$ is onto. Let $s\in\widehat{X}$, we define for each positive rational $q$ the set 
\begin{eqnarray}
\label{cauchyregular}
S_q=\{x\in X:\widehat{d}(j(x),s)\leq q\}.
\end{eqnarray}
Since $j$ is an isometric embedding and using  the triangular inequality, it is easy to verify that $S=\{S_q:q\in\mathbb{Q}^+\}$ is a regular family. By Lemma \ref{richlema} $\widehat{d}_R([S],i_R(x))\leq q$, and thus $\widehat{d}(\varphi([S]),\varphi(i_R(x)))\leq q$, as $\varphi$ is an isometric embedding. By the commutativity of the diagram, we have that  $\widehat{d}(\varphi([S]),j(x))\leq q$. Since $x\in S_q$ we have also $\widehat{d}(j(x),s)\leq q$, so by triangular inequality on $\widehat{X}$ we conclude $\widehat{d}(\varphi([S]),s)\leq 2q$ for any $q\in\mathbb{Q}^+$, that is, $\varphi([S])=s$ and $\varphi$ is onto.

(ii)  Let $\widehat{j}:\widehat{X}\to \widehat{\widehat{X}}$ be the natural map, $\varphi:\widehat{X}_R\to \widehat{X}$ the map defined in part (i) and $\widehat{i}_R:\widehat{X}_R\to \widehat{\widehat{X}}_R$ the natural map. By Theorem \ref{extension-thm}, there is an isometric embedding $\psi:\widehat{\widehat{X}}_R\to \widehat{\widehat{X}}$ such that the following diagram commutes

$$
\begin{tikzcd}
\widehat{X}_R\arrow{r}{\varphi} \arrow{d}{\widehat{i}_R} &\widehat{X}\arrow{d}{\widehat{j}}\\ \widehat{\widehat{X}}_R \arrow{r}{\psi} &\widehat{\widehat{X}}
\end{tikzcd}
$$

Since $\widehat{j}$, $\varphi$ are onto maps, then it is easy to verify that $
\widehat{i}_R$ is also onto. 

\end{proof}
\end{thm}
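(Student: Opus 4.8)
The plan is to derive both statements as essentially formal consequences of the extension theorem (Theorem~\ref{extension-thm}) together with Richman's results recalled above; the only genuine computation lies in the surjectivity argument of part~(i).

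For~(i), I would first produce $\varphi$ abstractly. Since $i_R\colon X\to\widehat X_R$ is an isometric embedding with dense image (Richman) and $j\colon X\to\widehat X$ is an isometric embedding with dense image (Theorem~\ref{Embedded-thm}), Theorem~\ref{extension-thm} applied with $A=B=X$, $f=\mathrm{id}_X$ and $h=i_R$ yields a unique isometric embedding $\varphi\colon\widehat X_R\to\widehat X$ with $\varphi\circ i_R=j$. It then remains to see $\varphi$ is onto. Given $s\in\widehat X$, I would put $S_q=\{x\in X:\widehat d(j(x),s)\le q\}$ for $q\in\mathbb Q^+$; the triangle inequality on $\widehat X$ and the fact that $j$ is an isometric embedding show $S=\{S_q\}$ is a regular family, while density of $j(X)$ in $\widehat X$ shows each $S_q$ is nonempty. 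By Richman's Lemma~\ref{richlema}, $\widehat d_R(i_R(x),[S])\le q$ whenever $x\in S_q$; applying the isometric embedding $\varphi$ and using $\varphi\circ i_R=j$ gives $\widehat d(j(x),\varphi([S]))\le q$, and combining with $\widehat d(j(x),s)\le q$ via the triangle inequality yields $\widehat d(\varphi([S]),s)\le 2q$ for every $q\in\mathbb Q^+$. Upper continuity then forces $\varphi([S])=s$, so $\varphi$ is an isometry.

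For~(ii), I would bootstrap through $\varphi$. Let $\widehat j\colon\widehat X\to\widehat{\widehat X}$ be the natural embedding; by the completeness of $\widehat X$ established just above, $\widehat j$ is onto. Applying Theorem~\ref{extension-thm} with $A=\widehat X_R$, $B=\widehat X$, $f=\varphi$ (an isometric embedding by~(i)) and $h=\widehat i_R\colon\widehat X_R\to\widehat{\widehat X}_R$ (an isometric embedding with dense image, by Richman's Theorem applied to the premetric space $\widehat X_R$), I obtain a unique isometric embedding $\psi\colon\widehat{\widehat X}_R\to\widehat{\widehat X}$ with $\psi\circ\widehat i_R=\widehat j\circ\varphi$. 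Since $\widehat j$ and $\varphi$ are both onto, $\psi\circ\widehat i_R$ is onto; and $\psi$, being an isometric embedding, is injective modulo equivalence (if $\psi(a)=\psi(b)$ then $\widehat d(\psi(a),\psi(b))\le 0$, hence $\widehat d_R(a,b)\le 0$, i.e. $a=b$). Thus for $w\in\widehat{\widehat X}_R$, any $v\in\widehat X_R$ with $\psi(\widehat i_R(v))=\psi(w)$ satisfies $\widehat i_R(v)=w$, so $\widehat i_R$ is onto.

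I do not expect a real obstacle: once Theorem~\ref{extension-thm} is available the skeleton is pure diagram chasing. The one step requiring care is the surjectivity argument in~(i)---verifying that the family $\{S_q\}$ built from $s$ is regular with nonempty levels, and then assembling Lemma~\ref{richlema} with upper continuity to squeeze $\widehat d(\varphi([S]),s)$ down to $0$. In~(ii) the only thing to be watchful about is invoking Richman's theorem for the \emph{auxiliary} space $\widehat X_R$, so that $\widehat i_R$ qualifies as an isometric embedding with dense image and the extension theorem can supply $\psi$.
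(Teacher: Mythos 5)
Your proposal is correct and follows essentially the same route as the paper: obtain $\varphi$ from Theorem~\ref{extension-thm} applied to $\mathrm{id}_X$ and $i_R$, prove surjectivity via the regular family $S_q=\{x:\widehat d(j(x),s)\le q\}$ together with Lemma~\ref{richlema} and upper continuity, and deduce (ii) by extending $\varphi$ to $\psi:\widehat{\widehat X}_R\to\widehat{\widehat X}$ and using injectivity of the isometric embedding $\psi$. You even make explicit two details the paper leaves implicit (the nonemptiness of each $S_q$ via density of $j(X)$, and the injectivity argument behind ``it is easy to verify that $\widehat i_R$ is onto'').
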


Note that from the  proof of the  previous theorem we get that every Cauchy family is equivalent to a regular family (in the sense of Richman as defined in the introduction). And vice versa, every regular family is equivalent (in the sense of Richman) to a Cauchy family. In fact, the regular family  given by \eqref{cauchyregular}  is also a Cauchy family, and moreover, it is the maximal regular family  that  Richman \cite{Richman2008} showed exists in any equivalence class.

\bibliographystyle{plain}

\end{document}